\setlist[enumerate,1]{label=\arabic*)} 
\global\let\tikz@ensure@dollar@catcode=\relax
\newcommand*{\Theorem}{Theorem}
\newcommand*{\Proposition}{Proposition}
\newcommand*{\Lemma}{Lemma}
\newcommand*{\Corollary}{Corollary}
\newcommand*{\Definition}{Definition}
\newcommand*{\Question}{Question}
\newcommand*{\Remark}{Remark}
\newcommand*{\Notation}{Notation}
\theoremstyle{plain}
\newtheorem{theorem}{\Theorem}
\newtheorem{proposition}[theorem]{\Proposition}
\newtheorem{corollary}[theorem]{\Corollary}
\newtheorem{lemma}[theorem]{\Lemma}
\theoremstyle{definition}
\newtheorem{definition}[theorem]{\Definition}
\newtheorem{question}[theorem]{\Question}
\theoremstyle{remark}
\newcommand{\bme}{\lp{BME_*}}
\def\sqsubsetneq{\mathrel{\sqsubseteq\kern-0.92em\raise-0.15em\hbox{\rotatebox{313}{\scalebox{1.1}[0.75]{\(\shortmid\)}}}\scalebox{0.3}[1]{\ }}}
\def\sqsupsetneq{\mathrel{\sqsupseteq\kern-0.92em\raise-0.15em\hbox{\rotatebox{313}{\scalebox{1.1}[0.75]{\(\shortmid\)}}}\scalebox{0.3}[1]{\ }}}
\title[On principles between $I\Sigma_1$ and $I\Sigma_2$, and monotone enumerations]{On principles between $\Sigma_1$- and $\Sigma_2$-induction, \\ and monotone enumerations}
\author{Alexander P.\ Kreuzer}
\address{Department of Mathematics \\
Faculty of Science \\
National University of Singapore \\
Block S17, 10 Lower Kent Ridge Road \\
Singapore 119076 
}
\email{matkaps@nus.edu.sg}
\urladdr{\url{http://aleph.one/matkaps/}}
\thanks{The first author is grateful to Leszek Kolodziejczyk for remarks to an earlier version of this paper. He was supported by the Ministry of Education of Singapore through grant R146-000-184-112 (MOE2013-T2-1-062).}
\author{Keita Yokoyama}
\address{School of Information Science \\
  Japan Advanced Institute of Science and Technology \\
  1-1 Asahidai, Nomi, Ishikawa, 923-1292, Japan
}
\email{y-keita@jaist.ac.jp}
\urladdr{\url{http://www.jaist.ac.jp/~y-keita/}}
\thanks{The work of the second author is partially supported by JSPS Grant-in-Aid for Research Activity Start-up grant number 25887026, JSPS fellowship for research abroad, and JSPS Core-to-Core Program (A. Advanced Research Networks).}
\thanks{Part of this work in this paper was done at the Institute of Mathematical Sciences (IMS) at National University of Singapore during the workshop ``Sets and Computations''.}
\keywords{fragments of arithmetics, reverse mathematics, Ackermann function, Paris Harrington theorem, ordinal numbers, bounded monotone enumerations}
\subjclass[2010]{03F30, 03B30}
\date{\today\ \thistime}
\begin{document}

\begin{abstract}
  We show that many principles of first-order arithmetic, previously only known to lie strictly between $\Sigma_1$-induction and $\Sigma_2$-induction, are equivalent to the well-foundedness of $\omega^\omega$. 
  Among these principles are the iteration of partial functions ($P\Sigma_1$) of H\'{a}jek and Paris, the bounded monotone enumerations principle (non-iterated, \lp{BME_1}) by Chong, Slaman, and Yang, the relativized Paris-Harrington principle for pairs, and the totality of the relativized Ackermann-P\'{e}ter function.
  With this we show that the well-foundedness of $\omega^\omega$ is a far more widespread than usually suspected.

Further, we investigate the $k$-iterated version of the bounded monotone iterations principle (\lp{BME_{\mathnormal{k}}}), and show that it is equivalent to the well-foundedness of the $k+1$-height $\omega$-tower $\omega^{\iddots^\omega}$.
\end{abstract}

\maketitle

In this paper we will investigate principles between $\Sigma_1$\nobreakdash-induction ($I\Sigma_1$) and $\Sigma_2$\nobreakdash-induction ($I\Sigma_2$). The following principles will be considered.
\begin{enumerate}
\item Iteration of partial functions, as introduced by H\'{a}jek, Paris in \cite{HP86}.
\item The bounded monotone enumeration principle (non-iterated), as introduced by Chong, Slaman, Yang in their proof of the fact that Ramsey's theorem for pairs and two colors (\lp{RT^2_2}) does not imply $\Sigma_2$-induction in \cite{CSY14,CSYtaInd}.
\item The relativized Paris-Harrington principle for pairs and arbitrarily many colors.
\item The totality of the Ackermann-P\'{e}ter function relativized to a total function.
\item The well-foundedness of $\omega^\omega$ (\lp{WF(\omega^\omega)}).
\end{enumerate}
Of all of these principles it is well known that they lie strictly between $I\Sigma_1$ and $I\Sigma_2$. However, their relations were mostly unknown. To the knowledge of the authors it was only known that the well-foundedness of $\omega^\omega$ implies the totality of the Ackermann-P\'{e}ter function, and that this is equivalent to the (non-relativized) Paris-Harrington principle for pairs.

We will show that all of the above-enumerated principles are equivalent over $I\Sigma_1$. 
This is surprising since these principles usually have been investigated separately, and the connection was apparently not expected.
For instance in \cite{HP98} the Pairs-Harrington principle, iteration of partial functions, and \lp{WF(\omega^\omega)} are considered but in separate sections.
The system \lp{WF(\omega^\omega)} has shown up in even more places before.
In \cite{sS88} Simpson showed that it is equivalent to Hilbert's basis theorem. Recently, Hatzikiriakou and Simpson that also a related result by Formanek and Lawrence on group algebras is equivalent to, see \cite{HSta}.

Given the many equivalent forms of \lp{WF(\omega^\omega)} of which many are natural statements, we believe that \lp{WF(\omega^\omega)} must be considered as a natural and robust system just like $B\Sigma_2$, which for instance occurs in the natural description as the infinite pigeonhole principle or as a certain partition principle, see \cite{CLY10}.

In addition to this we also investigate $k$-iterated bounded monotone enumeration principle as used in \cite{CSY14}, and characterize its strength. 
We will show that the $k$\nobreakdash-iterated version \lp{BME_\mathnormal{j}} is equivalent to the well-foundedness of $k+1$\nobreakdash-high $\omega$\nobreakdash-tower $\omega^{{\iddots^\omega}}=\omega^\omega_k$.
In particular, the $\Pi^0_3$-consequence of $\lp{BME} = \bigcup_{k\in \Nat} \lp{BME_k}$ are all  $\Pi^0_3$-sentences of \ls{PA}.

The paper is structured as follows. The first chapter will  introduce the principles mentioned above. In the following chapter the equivalences between them are proven. The third chapter deals with the iterated bounded monotone enumeration. The last chapter consists of concluding remarks.

\section{Introduction}

We will work over $I\Sigma_1$, that is Peano Arithmetic where the induction axiom is restricted to $\Sigma_1$-formulas. We will make use of stronger forms of induction (i.e., $I\Sigma_n$ with $n\ge2 $) and the bounded collection principle (i.e., $B\Sigma_n$).
If the reader is not familiar with these systems and principles, we refer him to \cite{HP98}.

\subsection{Iteration of functions}\label{sec:func}
A formula $\phi(x,y)$ represents a total function if $\Forall{x}\ExistsUn{y} \phi(x,y)$, it represents a partial function if for all $x$ there is at most one $y$ satisfying $\phi(x,y)$. We shall denote these statements by $\textsf{TFUN}(\phi)$, respectivly $\textsf{PFUN}(\phi)$.
We shall say that $s$ is an approximation to the iteration of such a function, if $s$ is a finite sequence such that
\[
\Forall{i< \lth(s)\!-\!1}\,\Forall{x,y} \left(\left(x \le (s)_i \AND \phi(x,y)\right) \IMPL y \le (s)_{i+1}\right)
.\]
We will denote this statement by $\textsf{Approx}_\phi(s)$.
The statement that all finite approximations of the iterations of a total resp.\ partial function is given by $\phi$ is then given by the following.
\begin{align*}
  (T\phi)\colon &\qquad \textsf{TFUN}(\phi) \IMPL \Forall{z}\Exists{s} \textsf{Approx}_\phi(s) \AND \lth(s)=z \\
  (P\phi)\colon &\qquad \textsf{PFUN}(\phi) \IMPL \Forall{z}\Exists{s} \textsf{Approx}_\phi(s) \AND \lth(s)=z
\end{align*}
These definitions are made relative to $I\Sigma_1$. For a class of formulas $\mathcal{K}$, the sets $\{ T\phi \mid \phi \in \mathcal{K}\} \cup I\Sigma_1$, $\{ P\phi \mid \phi \in \mathcal{K}\} \cup I\Sigma_1$ will be denoted by $T\mathcal{K}$ resp.\ $P\mathcal{K}$.

The following theorem collects the known facts about $T$, $P$.
\begin{theorem}[\cite{HP86}, {\cite[Chap.~I.2.(b)]{HP98}}] \mbox{}\label{thm:tpkn}
  \begin{enumerate}
  \item $T\Sigma_{n+1} \IFF T\Pi_n$, $P\Sigma_{n+1} \IFF P\Pi_n$, $P\Sigma_0 \IFF P\Sigma_1$.
  \item $T\Sigma_{n+1} \IFF I\Sigma_{n+1}$.
  \item $I\Sigma_{n+2} \IMPL P\Sigma_{n+1} \IMPL I\Sigma_{n+1}$. Here all implications are strict.
  \item $P\Sigma_{n+1}$ is incomparable with $B\Sigma_{n+2}$.
  \item $P\Sigma_{n+1}+B\Sigma_{n+2}$ is strictly weaker than $I\Sigma_{n+2}$.
  \end{enumerate}
\end{theorem}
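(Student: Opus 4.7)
The plan is to verify the five clauses in sequence, using the toolkit developed in \cite{HP86,HP98}.

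For clause (1), I would use the pairing trick: a $\Sigma_{n+1}$-formula $\phi(x,y)\equiv \exists z\,\psi(x,y,z)$ (with $\psi$ in $\Pi_n$) defining a total or partial function is converted to the $\Pi_n$-formula $\phi'(x,\langle y,z\rangle):=\psi(x,y,z)$, which defines a function on codes of pairs. An approximation to the iteration of $\phi'$ projects via the first coordinate to an approximation of $\phi$, and conversely $I\Sigma_1$ together with the standard pairing bounds suffices to locate the witnesses for the existential. The equivalence $P\Sigma_0 \IFF P\Sigma_1$ is a special case, since a $\Sigma_1$ partial function becomes a $\Sigma_0$ partial function after absorbing the existential into the value.

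For clause (2), the direction $I\Sigma_{n+1}\IMPL T\Sigma_{n+1}$ proceeds by $\Sigma_{n+1}$-induction on $i$ applied to the formula ``there exists an approximation of length $i{+}1$''; totality supplies the successor step. The converse is obtained by iterating a total $\Sigma_{n+1}$-function designed so that an approximation of appropriate length either witnesses $\Sigma_{n+1}$-induction or exhibits a contradiction. For clause (3), $I\Sigma_{n+2}\IMPL P\Sigma_{n+1}$ invokes $B\Sigma_{n+2}$ (provable from $I\Sigma_{n+2}$) to bound the possible values of the $\Sigma_{n+1}$-partial function on a fixed initial segment, and then $\Sigma_{n+2}$-induction on length builds the sequence; meanwhile $P\Sigma_{n+1}\IMPL I\Sigma_{n+1}$ is immediate from clause (2) since every total function is partial.

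The remaining content---strictness in (3), incomparability in (4), and the strict inequality in (5)---is model-theoretic. I would construct countable models of the weaker theory that refute the stronger principle, typically by forcing or end-extension over a $\Sigma_{n+1}$-definable cut. The main obstacle is clause (5): to separate $P\Sigma_{n+1}+B\Sigma_{n+2}$ from $I\Sigma_{n+2}$, one must simultaneously preserve the approximation principle and bounded collection while destroying $\Sigma_{n+2}$-induction. The construction in \cite{HP86,HP98} adds approximations for all $\Sigma_{n+1}$-partial functions by a careful iterated forcing that keeps $B\Sigma_{n+2}$ intact and arranges that induction fails on a specific $\Sigma_{n+2}$-formula; I would use that construction directly rather than redevelop it.
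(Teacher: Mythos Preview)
The paper does not give a proof of this theorem at all: it is stated as a collection of known facts with a citation to \cite{HP86} and \cite[Chap.~I.2.(b)]{HP98}, and no argument is supplied. So there is nothing in the paper to compare your proposal against; your sketch is essentially an outline of how the cited references establish the result, and as such it is broadly faithful to the standard arguments (pairing to drop a quantifier for (1), induction on the length of the approximation for (2) and the forward direction of (3), and model constructions for the separations).

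One small correction regarding the separations: the construction in \cite[Chap.~IV.1]{HP98}, which the present paper itself invokes in the proof of \prettyref{thm:psform}, is not an iterated forcing but the definable-closure model $H^1(M)$ built from iterating the $\Sigma_1$-definable hull. That model satisfies $B\Sigma_2$ and $P\Sigma_1$ while failing $I\Sigma_2$, and the higher-$n$ versions are obtained by relativisation. Describing it as ``forcing that keeps $B\Sigma_{n+2}$ intact'' mischaracterises the method, though the conclusion you draw is correct.
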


\subsection{Bounded monotone iterations}

Bounded monotone iterations will deal with enumerations of trees of natural numbers ($\Nat^{<\Nat}$). 

Let $E$ be a function given by a quantifier-free formula. We will regard $E[s]$ via a suitable coding as a finite subset of  $\Nat^{<\Nat}$ and assume that $E[s] \subseteq E[s+1]$.
We will refer to the parameter $s$ of $E$ as the stage of the enumeration and use $E$ also to refer to the tree enumerated by $E$, i.e.,
\[
\left\{\, \tau \in \Nat^{<\Nat} \sizeMid \Exists{s} \Exists{\sigma\in E[s]} \left(\tau \prec \sigma\right) \,\right\}.
\]

\begin{definition}[\cite{CSY14}]\label{def:me}
  $E$ is a \emph{monotone enumeration} if the following holds.
  \begin{enumerate}
  \item The empty sequence $\langle \rangle$ is enumerated at the first stage.
  \item At each stage only finitely many sequences are enumerated by $E$. (This is by our coding automatically the case.)
  \item\label{enum:me3} If $\tau$ is enumerated by $E$ at stage $s$ and $\tau_0$ is the longest initial segment enumerated by $E$ at a prior stage. Then
    \begin{enumerate}
    \item no extension of $\tau_0$ has been enumerated by $E$ before the stage $s$ and
    \item\label{enum:me3b} all sequences enumerated at stage $s$ are extensions of $\tau_0$.     
    \end{enumerate}
  \end{enumerate}
\end{definition}

Let $E$ be a monotone enumeration. For an element $\tau$ enumerated by $E$ at stage $s$ we call the maximal initial segments $(\tau_i)$ of $\tau$ enumerated at stages prior to $s$ the stage-by-stage sequence of $\tau$.
\begin{figure}
  \centering
  \begin{tikzpicture}
    [-, grow'=up,
    level/.style={level distance=1cm},
    level 1/.style={sibling distance = 4cm },
    level 2/.style={sibling distance = 1.3cm },
    level 3/.style={sibling distance = 1cm },
    ]
    \node (nr)  {$\langle\rangle$}  
    child { 
      node (n1) {$\tau_1$}
      child { node (n11) {$\tau_{11}$} }
      child { node (n12) {$\tau_{12}$}  
        child { node (n121) {$\tau_{121}$}  }
        child { node (n122) {$\tau_{122}$}  }
      }
      child { node (n13) {$\tau_{13}$}  }
      child { node (n14) {$\tau_{14}$}  
        child { node (n141) {} }
        child { node (n142) {}} 
      }
    }
    child{
      node (n2) {$\tau_2$}
      edge from parent
      child {node (n21) {$\tau_{21}$} edge from parent}
    };

    \path (n141) -- (n142) node[midway] {$\vdots$};

    \node (s1) [fit=(n1) (n2), draw=gray]{};
    \node (s11) [fit=(n11) (n12) (n13) (n14), draw=gray]{};
    \node (s112) [fit=(n121) (n122), draw=gray]{};
    \node (s2) [fit=(n21), draw=gray]{};      
  \end{tikzpicture}

  \small
  Strings $\tau_i$ in a box are enumerated at the same stage. 
  The stage-by-stage enumeration of say $\tau_{121}$ is $\langle\rangle, \tau_1,\tau_{12},\tau_{121}$.
  Not visible in the diagram is that notes enumerate at the same stage, say $\tau_1$, $\tau_2$, might be of different length.
  \caption{Tree enumerated by a monotone enumeration.}
  \label{fig:monenu}
\end{figure}
We say that \emph{a monotone enumeration $E$ is bounded by $b$} if for each $\tau$ in $E$ the length of its stage-by-stage sequence is bounded by $b$.

\begin{definition}
  \bme is the statement that a tree enumerated by a bounded monotone enumeration is finite.
\end{definition}

The following is known about the first-order strength of \bme.
\begin{theorem}[{\cite[Propositions~3.5, 3.6]{CSY14}}]\label{thm:bmeknow} \mbox{}
  \begin{enumerate}
  \item $I\Sigma_2 \vdash \bme$
  \item $B\Sigma_2 \nvdash \bme$
  \end{enumerate}
\end{theorem}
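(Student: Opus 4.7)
The plan is to treat the two parts separately. For (1), I would prove by induction on the bound $b$ that any monotone enumeration bounded by $b$ yields a finite tree. The main structural observation is clause (a) of \prettyref{def:me}: once a node $\tau_0$ has been properly extended in $E$, no further extension of $\tau_0$ can appear at any later stage, since otherwise $\tau_0$ would fail to be the \emph{longest} prior initial segment of some newly enumerated string. Combined with the fact that $E[s]$ is finite at each stage, this forces every node to be extended at most once, and it forces an extension of a node of stage-by-stage depth $k$ to produce only nodes of depth $k+1$.

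The base case $b=1$ is then immediate: no node besides $\langle\rangle$ can ever be enumerated. For the step from $b$ to $b+1$, either $\langle\rangle$ is never extended (and the tree is $\{\langle\rangle\}$), or $\langle\rangle$ is extended at a unique stage producing finitely many children $\tau_1,\ldots,\tau_k$; for each $i$ the subenumeration of strings extending $\tau_i$, re-rooted at $\tau_i$, is itself a bounded monotone enumeration with bound $b$, hence finite by the inductive hypothesis. Summing the contributions gives a finite tree. Since ``the tree is finite'' unfolds to $\Exists{N}\Forall{s}(|E[s]|\le N)$ and is therefore $\Sigma_2$, the induction genuinely requires $I\Sigma_2$.

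For (2), I would produce a model of $B\Sigma_2$ in which \bme fails by exploiting a failure of $\Sigma_2$-induction. Starting from a countable $M\models B\Sigma_2+\neg I\Sigma_2$, take a $\Sigma_2$ formula $\phi$ and an element $a$ witnessing the failure of induction for $\phi$. The idea is to let the $\Sigma_2$ witness-search for $\phi$ drive a quantifier-free enumeration $E$: each time a new witness is detected below the current threshold, extend the current branch. Calibrated so that the stage-by-stage depth along every path is bounded by a small standard $b$ while the number of new witnesses discovered over time is unbounded in $M$, this yields a bounded monotone enumeration inside $M$ whose tree is infinite. The main obstacle is the encoding: one must package a $\Sigma_2$ witness pattern as a genuinely $\Sigma_0$-definable enumeration that respects the stringent monotonicity conditions of \prettyref{def:me} while keeping the depth bound standard. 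This is the nontrivial construction carried out by Chong, Slaman, and Yang in \cite{CSY14}.
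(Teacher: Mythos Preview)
Your sketch follows the direct approach of \cite{CSY14}. The paper, however, does not prove \prettyref{thm:bmeknow} that way: the theorem is quoted from \cite{CSY14}, and the paper's own contribution is the remark (immediately after \prettyref{thm:main}) that \prettyref{thm:main} together with \prettyref{thm:tpkn} reproves it. Concretely, part~(1) falls out of $I\Sigma_2\vdash P\Sigma_1$ (\prettyref{thm:tpkn}(3)) and $P\Sigma_1\vdash\bme$ (\prettyref{thm:main}). Part~(2) is likewise re-derivable later in the paper, via \prettyref{thm:bmeack} (which shows $I\Sigma_1+\bme$ proves the Ackermann--P\'eter function total) combined with the $\Pi_3$-conservativity of $B\Sigma_2$ over $I\Sigma_1$; this gives $B\Sigma_2\nvdash\bme$ without any model construction. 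So your argument and the paper's are genuinely different: yours is the self-contained combinatorial/model-theoretic treatment from the source, while the paper factors both directions through $P\Sigma_1$ and through growth-rate considerations.

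One real formalization point on your part~(1): as written, your induction on the bound $b$ ranges over \emph{all} monotone enumerations bounded by $b$. That is a schema, not a single $\Sigma_2$ formula, so it cannot literally be an instance of $I\Sigma_2$ inside the theory; in particular, the ``inductive hypothesis'' you invoke for the re-rooted subenumerations at the $\tau_i$ is an instance for a \emph{different} enumeration. To make the argument internal, fix $E$ bounded by $b$ and induct on $k\le b$ with the single $\Sigma_2$ predicate ``there is a stage after which no new node of stage-by-stage depth $\le k$ is enumerated.'' The re-rooted subenumerations then become parameter instances of the same $E$, and the inductive step (finitely many depth-$k$ nodes, each extended at most once, at a $\Sigma_1$-searched stage) goes through with the bounding available in $I\Sigma_1$. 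With that adjustment your part~(1) is correct; your part~(2) is, as you acknowledge, a pointer to the Chong--Slaman--Yang construction rather than a proof.
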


Note that \bme is equivalent to \lp{BME_1} as defined by Chong, Slaman an Yang. This follows for instance from Theorems~\ref{thm:m} and \ref{thm:bme}.

\subsection{Paris-Harrington theorem}

The Paris-Harrington theorem (\lp{PH}) is a strengthening of the finite Ramsey's theorem.
It is one of the classical examples of a natural first-order theorem which is not provable from Peano Arithmetic. 
In this paper we will be only concerned with a (variant of a) fragment of \lp{PH}.

As usual in this context, we will write $X \rightarrow (q)^u_z$ for the statement that each coloring of unordered $u$-tuples of $X$ with $z$ colors has a homogenous set of cardinality $q$. 
In this notation finite Ramsey's theorem is simply the statement 
\[
\Forall{q \ge 1} \Forall{u\ge 1} \Forall{z} \Exists{y} \big([0,y] \rightarrow (q)^u_z\big)
.\]

To state the Paris-Harrington variant of Ramsey's theorem we will need the following.
A finite set $X$ is called \emph{relatively large} if $\min X < \lvert X \rvert$. 

We will write $X \underset{*}\rightarrow (q)^u_z$ if each coloring of unordered $u$-tuples of $X$ with $z$ colors has a \emph{relatively large} homogenous set of cardinality at least $q$. The Paris-Harrington theorem is then the following statement.
\[
(\lp{PH})\colon \Forall{x} \Forall{q \ge 1} \Forall{u\ge 1} \Forall{z} \Exists{y} \big([x,y] \underset{*}\rightarrow (q)^u_z\big).
\]
(Note that we need to vary the starting point $x$ of the interval since the property of being relatively large is not translation invariant.)

We will write $\lp{PH(\mathnormal{u},\mathnormal{z})}$ for the restriction of $\lp{PH}$ to $u$-tuples and $z$ many colors. We will write $\lp{PH(\mathnormal{u})}$ for $\Forall{z} \lp{PH}(u,z)$.

We will also need the relativization \lp{PH^*(\mathnormal{u},\mathnormal{z})} of \lp{PH(\mathnormal{u},\mathnormal{z})} given by the following. 
Let $\phi(n)$ be a $\Sigma_1$-formula describing an infinite set. Then  \lp{PH^*(\mathnormal{u},\mathnormal{z})} states that $\lp{PH(\mathnormal{u},\mathnormal{z})}$ holds relativized to $[x,y] \cap \{n\mid \phi(n)\}$. In other words,
\[
\lp{PH^*(\mathnormal{u},\mathnormal{z})}: \Forall{k}\Exists{n>k} \phi(n) \IMPL \Forall{x}\Forall{q\ge 1} \Exists{y} \Big(\big([x,y]\cap \{n\mid \phi(n)\}\big) \underset{*}{\rightarrow} (q)^u_z\Big)
.\]
\lp{PH^*(\mathnormal{u})} is defined as above.

We will be mainly concerned with \lp{PH(2)}, \lp{PH^*(2)}.

\subsection{Ackermann function}

The Ackermann-P\'{e}ter function is given by the following defining equations.
\begin{equation}\label{eq:ack}
A(m,n) :=
\begin{cases}
  n + 1 & \text{if $m=0$,} \\
  A(m-1,1) & \text{if $m>0$ and $n=0$,} \\
  A(m-1, A(m,n-1)) & \text{if $m,n>0$,}
\end{cases}
\end{equation}
It is known that $I\Sigma_2$ or even the statement that $\omega^\omega$ is well-order implies the totality of Ackermann-P\'{e}ter function.
Let $f$ be a strictly monotonic function.
The relativized Ackermann-P\'{e}ter function $A_f$ is defined as $A$ but with the base case set to $f$, i.e., 
\begin{equation}\label{eq:ackf}
  A_f(0,n) := f(n).
\end{equation}
We will write $\lp{A^*}$ for the statement that for each function $f$ (given by a quantifier-free formula) the Ackermann-P\'{e}ter function relative to $f$ is total.

\subsection{Ordinals}

We will use ordinals $<\epsilon_0$. For this we will fix a suitable ordinal notation. See e.g.\ \cite[Section~II.3]{HP98} for details.
We shall write \lp{WF(\alpha)} for the statement that $\alpha$ if \emph{well-ordered} or \emph{well-founded},
that is there is no infinite descending sequence of ordinals $\alpha_i$ starting from $\alpha$.
In the context of fragments of first-order arithmetic the sequence $\alpha_i$ is understood to be primitive recursive in the theory, which is equivalent to saying that $\alpha_i$ is given as a $\Sigma_1$-function, as defined in \prettyref{sec:func}.

Since our work is motivated by results in second-order arithmetic/reverse mathematics, we would note that in that context well-foundedness is defined differently, see \cite{sS88}.
There the descending sequence $\alpha_i$ is given by a second-order  object $X$ coding the function $f\colon i \mapsto \alpha_i$.
Since that $\Sigma_1$-function in the sense of \prettyref{sec:func} are exactly the functions from which a theory proves to be recursive, recursive comprehension gives that the $\Sigma_1$-functions and the second-order functions coincide.
This immediately shows that the first- and second-order definitions of well-foundedness are equivalent.

\medskip

The main result of this paper is the following.
\begin{theorem}\label{thm:m}
  Over $I\Sigma_1$ the following are equivalent:
  \begin{enumerate}[label=(\roman*)]
  \item\label{enum:m1} $P\Sigma_1$,
  \item\label{enum:m2} \lp{BME_*},
  \item\label{enum:m3} \lp{PH^*(2)},
  \item\label{enum:m4} \lp{A^*},
  \item\label{enum:m5} \lp{WF(\omega^\omega)}.
  \end{enumerate}
\end{theorem}

The proof will proceed as follows.  
\begin{itemize}
\item $\ref{enum:m1} \Leftrightarrow \ref{enum:m2}$ (\prettyref{pro:psigbme}),
\item $\ref{enum:m1} \Rightarrow \ref{enum:m4}$ (\prettyref{pro:pack}),
\item $\ref{enum:m4} \Rightarrow \ref{enum:m3}$ (\prettyref{pro:PHack}),
\item $\ref{enum:m3} \Rightarrow \ref{enum:m5}$ (\prettyref{pro:ph2om})
\item $\ref{enum:m5} \Rightarrow \ref{enum:m2}$ (\prettyref{pro:bme})
\item $\ref{enum:m5} \Rightarrow \ref{enum:m4}$ is a classical result.
\end{itemize}

\section{The proof of \prettyref{thm:m}}

\begin{proposition}\label{pro:psigbme}
  $I\Sigma_1 \vdash P\Sigma_1 \IFF \bme$
\end{proposition}
\begin{proof}
  ``$\rightarrow$'': Let $E$ be a monotone enumeration. Assume that $E$ is bounded by $b$.
  Define the partial functions
  \begin{align*}
    F'(\tau) &:= [\text{first stage $s$ such that extensions of $\tau$ are enumerated in $E$}]
    \shortintertext{and}
    F(\tau) &:= E[F'(\tau)]\setminus E[F'(\tau)-1] .
  \end{align*}
  The partial function $F(\tau)$ yields the set of all extensions of $\tau$ that are newly enumerated at the first stage where extensions of $\tau$ enter into $E$. Since $E$ is a monotone enumeration these are all direct extensions of $\tau$.

  The graph of $F'$ can be defined by the following $\Sigma_0$\nobreakdash-formula 
  \[
  \phi'(\tau,s):\equiv  \Exists{\tau'\in E[s]\setminus E[s-1]} \,\left(\tau\prec\tau'\right) \AND \Forall{\tau'\in E[s-1]}\, \left(\tau\nprec \tau'\right)
  .\] 
  The partial function $F$ can then be defined by the $\Sigma_1$\nobreakdash-formula 
  \[
  \phi(\tau,x) :\equiv \Exists{s} \left(\phi'(\tau,s) \AND x=\big[E[s]\setminus E[s-1]\big]\right)
  .\]

  We make the assumption that for each code of a finite set $x$ we have that $y\in x$ implies $y\le  x$. (This is for instance the case for the usual coding based on Cantor pairing.)

  Then we have for each stage-by-stage enumeration $(\tau_i)$ that $\tau_{i+1} \le F(\tau_i)$. Hence $\tau_{i+1} \le F^{i+1}(\tau_0)=F^{i+1}(\langle \rangle)$.
  As a consequence each element in any $b$\nobreakdash-bounded stage-by-stage enumeration is bounded by $\max_{i\le b} \{ F^i(\langle\rangle)\} $. Now by $P\Sigma_1$ we can bound this value and obtain that $E$ is finite.

  \medskip
  ``$\leftarrow$'': Let $\phi(x,y)$ be a quantifier-free formula and assume $\textsf{PFUN}(\phi)$. (Quantifier-free is sufficient by \prettyref{thm:tpkn}.(1).) 
  Let $b$ be given. 
  We will construct a $b$\nobreakdash-bounded monotone enumeration $E$ which will give an approximation $s$ of length $b$ to the iteration of $\phi$. \\
  At stage $0$ we will enumerate $\langle 0 \rangle$ into the tree.\\
  At stage $s+1$ we search for the smallest $\sigma=\langle x_0,\dots,x_k\rangle\in E[s]$ such that $\lvert \sigma \rvert < b$ and $\Exists{y<s+1} \phi(x_k,y)$.
  If such a $\sigma$ exists then enumerate $\sigma \ast \langle 0 \rangle, \sigma \ast \langle 1 \rangle, \dots, \sigma \ast \langle y \rangle$. Otherwise do nothing.

  By $\bme$ this tree is finite.  
  Let $m_i$ be the maximum of the elements in the $\le i$ levels of $E$. We claim that $s=\langle m_0,m_1,\dots,m_{b}\rangle$ satisfies $\textsf{Approx}_\phi(s)$.
  We prove this by induction on the length of $s$. For $\langle m_0 \rangle = \langle 0 \rangle$ this is clear.
  Assume that the statement is true for $\langle m_0,\dots, m_i\rangle$. First we consider the case that the maximum $m_{i+1}$ is attained at a level $<i$, i.e., $m_i=m_{i+1}$ and by the construction of $E$ we have that $\Forall{x \le m_i}\Forall{y} \phi(x,y) \IMPL y\le m_i$.
  From this it follows immediately that also $\langle m_0,\dots, m_i,m_i\rangle$ satisfies $\textsf{Approx}_\phi$.
  Now consider the case that $m_{i+1}$ is attained at the $(i+1)$\nobreakdash-th level and no prior level. By construction of $E$ there must be the elements $[0;m_{i}]$ on the $i$\nobreakdash-th level, and we have $\Forall{x<m_{i}} \Forall{y} \phi(x,y) \IMPL y\le m_{i+1}$, which yields that $\langle m_0,\dots, m_i,m_{i+1}\rangle$ satisfies $\textsf{Approx}_\phi$.
\end{proof}

\begin{proposition}\label{pro:pack}
  $I\Sigma_1 \vdash P\Sigma_1  \IMPL \lp{A^*}$.
\end{proposition}
\begin{proof}
  For notational ease we will only show that $A(m,n)$ is total. The relativization to $A_f(m,n)$ is straightforward.

  Let $\phi_A(m,n,k)$ be the $\Sigma_1$\nobreakdash-formula describing the graph of the (relativized) Ackermann-P\'{e}ter function $A$ as in \eqref{eq:ack} and $\psi_A(m,n)\equiv \Exists{k} \phi_A(m,n,k)$ be the $\Sigma_1$\nobreakdash-formula which states that $A(m,n)$ is defined.
  Clearly,
  \begin{equation}\label{eq:ack1}
    \Forall{n}\, \psi_A(0,n).
  \end{equation}

  We claim that $I\Sigma_1$ proves
  \begin{equation}\label{eq:ack2}
    \Forall{m,n} \left( \NOT \psi_A(m,n)\IMPL \Exists{n'} \NOT \psi_A(m-1,n')\right). 
  \end{equation}
  Indeed, suppose  $\NOT \psi_A(m,n)$ and in particular that $m>0$. Then by $I\Sigma_1$ we can find a $k$ which is minimal with $\NOT \psi_A(m,k)$. If $k=0$ then by definition of $A$ we have $\NOT \psi_A(m-1,1)$. If $k>0$ then by minimality $A(m,k-1)$ is defined, thus $A(m-1,A(m,k-1))$ cannot be defined and therefore $\NOT \psi_A(m-1, A(m,k-1))$.

  $\Sigma_2$-induction applied to \eqref{eq:ack2} would now immediately give that $\NOT \psi_A(m,n)$ implies $\Exists{n'} \NOT \psi_A(0,n')$. ($\Sigma_2$-induction is required since $\Exists{n'} \NOT \psi_A(m,n')$ is $\Sigma_2$.)
  Together with \eqref{eq:ack1} this would yield the totality of $A$.

  We will show how to use $P\Sigma_1$ to bound $n'$ occurring in \eqref{eq:ack2}. With this, $I\Sigma_1$ suffices to carry out this induction.

  Let $\langle m,n \rangle$ denote the Cantor pairing function and $(x)_0,(x)_1$ the unpairing functions. Recall that $m,n < \langle m,n \rangle$. To cover both parameters of $A(m,n)$ we will use the following modification
  \[
  A'(x) := \left\langle A\big((x)_0,(x)_1\big), A\big((x)_0,(x)_1\big) \right\rangle
  \]
  Let $\phi_{A'}(x,k)$ be the $\Sigma_1$-formula describing the graph of $A'$.

  Suppose that $A(m,n)$ is not defined or in other words $\NOT \psi_A(m,n)$. Let $c:=\max(m,n)$.

  Now by $P\Sigma_1$ arbitrary long approximations to $A'$ exists. 
  Since $A(0,n)=n+1$, and assuming that $\langle 0, 0 \rangle = 0$, which is the case for Cantor pairing, we have for any approximation $s$ of $A'$
  \[
  (s)_j \ge \langle j+1,j+1 \rangle, \qquad \text{for }j<\lth(s) .
  \]
  Therefore, if $A(m,n)$ with $m,n<c $ is defined then $A(m,n) \le (s)_c$ 
  for any approximation $s$ to $A'$ of length $> c$.

  Now as in the argument above, assume that $A(m,n)$ is not defined. Then we know that there is a $k<m$ such that $A(m,k-1)$ is defined and $A(m-1,A(m,k-1))$ is not defined or $A(m-1,1)$ is not defined. In particular, for a long enough approximation $s$ of $A'$ we have
  \[
  \Exists{n' < (s)_c} \NOT \phi_A(m-1,n')
  .\]
  Since $m,n'$ are bound by $(s)_c$ one obtains by the same argument that
  \[
  \Exists{n'' < (s)_{c+1}} \NOT \phi_A(m-2,n'')
  .\]
  Iterating this argument gives then 
  \[
  \Exists{n^* < (s)_{c+m-1}} \NOT \phi_A(0,n^*)
  \]
  and with this the desired contradiction to \eqref{eq:ack1}. This argument can be carried out in $P\Sigma_1$ since this iteration is---after building the approximation $s$ of sufficient ($=2c$) length---provable in $I\Sigma_1$ which is a consequence of $P\Sigma_1$.
\end{proof}

It is known that the totality of the Ackermann function implies \lp{PH}, see Theorem~II.3.36 and Fact~II.3.34 of \cite{HP98}. We show here how to relativize this proof to obtain the following theorem.
\begin{proposition}\label{pro:PHack}
  $I\Sigma_1\vdash \lp{A^*} \IMPL \lp{PH^*(2)}$.
\end{proposition}

Before we can prove this theorem we will need some notation and lemmata. In a canonical way we can define a fundamental sequence $\{\alpha\}(n)$ for each $\alpha< \epsilon_0$. That is a sequence such that $\{\alpha\}(n)$ converges monotonically from below to $\alpha$ if $\alpha$ is a limit and the predecessor otherwise. For instance $\{\omega\}(n) = n$. This sequence will be $\Delta_1$. See \cite[II.3.a)]{HP98} for details.

We say that a finite set $X=\{x_0<x_1<x_2<x_3<\dots<x_n\}$ is $\alpha$-large if the sequence
\[
\{\alpha\}(x_0), \big\{\{\alpha(x_0\}\big\}(x_1), \big\{\big\{\{\alpha(x_0\}\big\}(x_1)\big\}(x_2), \dots 
\]
reaches $0$. It is easy to see that $\omega$-large is the same as relatively large (by using the fact $\{\omega\}(n)=n$ and $\{n\}(m)=n-1$).

\begin{lemma}[{\cite[Section~6.2]{KS81}}]\label{lem:KS81}
  Let $z\ge 2$, $\theta:=\omega^{z+3}+\omega^3+z+4$. Further, let $X$ be an $\theta$-large set. Assume that the pairs of $X$ are colored with $z$ many colors. There exists a subset $Y$ of $X$ that is homogenous and relatively large.

  In other words, for $X$ we have that the conclusion of $\lp{PH^*}(2,z)$ holds.
\end{lemma}

\begin{lemma}[{\cite[Lemma~II.3.21.(3)]{HP98}}]\label{lem:hp98-321}
  Suppose $\alpha \gg \beta > 0$ (that means, looking at the Cantor-normals forms of $\alpha=\sum_{i=0}^x \omega^{\mu_i} a_i$, $\beta=\sum_{i=0}^y \omega^{\nu_i} b_i$ we have that $\mu_0 \ge \nu_y$). Then $X$ is $(\alpha+\beta)$-large if{f} there are $X_\alpha,X_\beta$ such that $X=X_\beta\cup X_\alpha$, $\max(X_\beta) < \min(X_\alpha)$, and $X_\alpha$ is $\alpha$-large and $X_\beta$ is $\beta$-large.  
\end{lemma}

\begin{lemma}[cf.~{\cite[Lemma~II.3.30.(3)]{HP98}}]\label{lem:hp98-330}
  Let $g$ be the strictly increasing enumeration of an infinite set $X$. Let $f_\alpha$ be the fast growing hierarchy relativized to $g$ as follows.
  \begin{equation}\label{eq:hp98-330}
    \begin{split}
      f_0(n) &:= g(n) \\
      f_{\beta+1}(n) &:= f^{n}_\beta(g(n+1)), \quad\text{where $f^n$ is the $n$-fold iteration}\\
      f_\lambda(n) &:= f_{\{\lambda\}(n)}(g(n+1)).
    \end{split}
  \end{equation}
  If $x\in X$, the set $[x,f_{\alpha}(x)] \cap X$ is $\omega^\alpha$-large.
\end{lemma}
\begin{proof}[Proof of \prettyref{lem:hp98-330}]
  First observe that for all $\alpha,n$ we have $f_\alpha(n)\in X$. We will use the following claim.
    
  \noindent \textbf{Claim: }
  Assume that the statement of the lemma holds for $\alpha$ and that $x\in X$. Then the set  $[x,f_{\alpha}^y(x)] \cap X$ is $\omega^\alpha \cdot y$-large. \\
  \textbf{Proof of claim: }
  The statement is shown by induction in $y$. Suppose $[x,f_{\alpha}^y(x)] \cap X$ is $\omega^\alpha \cdot y$ large. By the assumption we have that $[x, f_{\alpha}(x)] \cap X$ is $\omega^\alpha$-large, and by induction hypothesis that $[f_\alpha(x),f_{\alpha}^y(f_{\alpha}(x))]\cap X$ is $\omega^\alpha \cdot y$-large. Now \prettyref{lem:hp98-321} gives the claim.
  \medskip
  
  We prove the lemma by quantifier-free transfinite induction. (We will use it only for $\alpha<\omega$ in the proof of \prettyref{pro:PHack}.)
  Consider $\alpha+1$ and $x=g(n)\in X$. Now $[x,z]\cap X$ is $\omega^{\alpha+1}$-large if{f} $[g(n+1),z]\cap X$ is $\omega^{\alpha} \cdot x$-large, i.e., if $z\ge f_{\alpha}^x(g(n+1))$. Since $f_{\alpha}^x(g(n+1)) \le f_{\alpha}^{x}(g(x+1)) = f_{\alpha+1}(x)$, the claim follows.
  For the limit case consider $\lambda$ and again $x=g(n)\in X$. Then $[x,z] \cap X$ is $\omega^\lambda$-large if{f} $[g(n+1),z] \cap X$ is $\omega^{\{\lambda\}(x)}$-large, i.e., $z\ge f_{\{\lambda\}(x)}(g(n+1))$. Thus it suffices if $z\ge f_{\{\lambda\}(x)}(g(x+1))=f_\lambda(x)$.
\end{proof}

\begin{proof}[Proof of \prettyref{pro:PHack}]
  Let $\phi(n)$ be a $\Sigma_1$-formula describing an infinite set. Assume that a number of colors $z$ is given. By \prettyref{lem:KS81} (we check that it formalizes in $I\Sigma_1$) it is sufficient to find a $\theta$-large subset of $X:=\{n\mid \phi(n)\}$. 
  We can apply \prettyref{lem:hp98-330} to $X$ (a suitable $g$ exists by $I\Sigma_1$) and reduce the problem to showing that $f_{z+4}(x)$ as in \eqref{eq:hp98-330} is total. This follows from the totality of the relativized Ackermann-P\'{e}ter function. (We have for instance that $A_g(2k,n)$ majorizes $f_k(n)$.)
\end{proof}

\begin{proposition}\label{pro:ph2om}
  $I\Sigma_1\vdash \lp{PH^*(2)} \IMPL \lp{WF(\omega^\omega)}$.
\end{proposition}
\begin{proof}
  It is well known that the order of $\omega^\omega$ is isomorphic to the lexicographic order $<^*$ of $\Nat^{<\Nat}$. (To see this consider the order-isomorphism $n_0n_1n_2\cdots n_k \mapsto \omega^k \cdot (n_k+1) + \dots + \omega^2 \cdot n_2 + \omega^1 \cdot n_1 + n_0$.)
  
  Assume that $\omega^\omega$ is not well-ordered. Then there is a function $f\colon\Nat \longto \Nat^{<\Nat}$ such that $f(n) \mathrel{{}^*{>}} f(n+1)$. We will show that this contradicts \lp{PH^*(2)}.
  Let $b:=\lth(f(0))$. By definition of the lexicographic order we know that $\lth(f(n))\le b$ for all $n$. We define a $\Delta_1$-set $X$ and a strictly increasing $\Delta_1$-function $h\colon X\longto \Nat$, such that $\max_i \big(f(h(n))\big)_i < n$ and $\min(X)>b$. Such $X,h$ can be build by primitive recursion by
  \begin{align*}
  h(0) &:=0, \\
  h(n+1) &:=
  \begin{cases}
    h(n)+1 & \text{if }\max_i \big(f(h(n)+1)\big)_i < n+1, \\
    h(n) & \text{otherwise.}
  \end{cases} \\
  X &:= \{ n> \max\nolimits_i\big(f(0)\big)_i, b \mid h(n) \neq h(n-1) \} 
  .\end{align*}
  It is clear that $X$ is infinite.

  Define the coloring $c\colon [X]^2 \longto b\cup \{-1\}$ by the following
  \[
  c(\{n,m\}):= 
  \begin{cases}
    \max\left(\left\{\, i<b \sizeMid 
        \begin{multlined}
          \big(f(h(n))\big)_i \neq \big(f(h(m))\big)_i \\
          \AND i<\lth(f(h(m))
        \end{multlined}
      \right\}\right)
    & \text{if such an $i$ exists,} \\
    -1 & \text{otherwise.}
  \end{cases}
  \]
  By \lp{PH^*(2)} there exists a $c$-homogenous, relatively large set $Y\subseteq X$. 
  First assume that $c([Y]^2)=-1$.
  This implies that for $n,m\in Y$ we have 
  \[
  n < m \IMPL  f(h(n)) \sqsupsetneq f(h(m)).
  \]
  Therefore, $\lth(f(h(n)) > \lth(f(h(m))$. Since the length of $f(n)$ is bounded by $b$, 
  there must be a strictly decreasing sequence of natural numbers $\le b$ of length $\lvert Y \rvert > \min Y > b$, which is a contradiction.

  Now assume $c([Y]^2)=i\neq -1$. Then for $n,m\in Y$ we have
  \[
  n<m \IMPL \big(f(h(n))\big)_i > \big(f(h(m))\big)_i.
  \]
  Since $\big(f(h(\min Y))\big)_i < \min Y$, we have decreasing sequence of length $\lvert Y \rvert > \min Y$ of natural numbers $<\min Y$, which is again a contradiction.
\end{proof}

\begin{proposition}\label{pro:bme}
  $I\Sigma_1\vdash \lp{WF(\omega^\omega)} \IMPL \bme$
\end{proposition}
\begin{proof}
  Let $E[s]$ be a $b$-bounded monotone enumeration. We will assign to the trees $E$ and $E[s]$ an ordinal in the following way.

  For $\tau\in E$ let ${\lvert \tau \rvert}_E$ be length of the stage-by-stage enumeration of $\tau$. 
  We say a $\tau$ is maximal in its stage if there is no extension $\tau'\in E$ of $\tau$ with ${\lvert \tau \rvert}_E = {\lvert \tau' \rvert}_E$.
  For maximal $\tau,\tau'\in E$ define $\tau\sqsubset_E \tau'$ if $\tau \sqsubset \tau'$ and ${\lvert\tau\rvert}_E = {\lvert \tau'  \rvert}_E - 1$.
  To a maximal $\tau \in E$ we assign the following ordinal.
  \begin{equation}\label{eq:zeta}
    \zeta_E(\tau) := 
    \begin{cases}
      0 & \text{if ${\lvert \tau \rvert}_E = b$,} \\
      \omega^{b-{\lvert \tau \rvert}_E} & \text{if $\tau$ is a leaf in $E$ and ${\lvert \tau \rvert}_E<b$,} \\
      \sum_{\tau'\sqsupset_E \tau} \zeta_E(\tau') & \text{if $\tau$ is not a leaf.}
    \end{cases}
  \end{equation}
  Same for $E[s]$ instead of $E$.
  We define the ordinal $\zeta_E$, $\zeta_{E[s]}$ for $E$ respectively $E[s]$ to be $\zeta_E(\langle\rangle)$, $\zeta_{E[s]}(\langle\rangle)$.

  By definition it is clear that $\zeta_E, \zeta_{E[s]} \le \omega^{b}$. Moreover, we claim that if new elements are enumerated into $E[s+1]$ then
  $\zeta_{E[s+1]} < \zeta_{E[s]}$. Indeed, if there are new elements enumerated at stage $s+1$ there must be a leaf $\tau\in E[s]$ such that all elements are successors of $\tau$. Then by definition we have
  $\zeta_{E[s+1]}(\tau) < \zeta_{E[s]}(\tau)$. Induction on ${\lvert \tau \rvert}_E$, gives that $\zeta_{E[s+1]}(\tau') < \zeta_{E[s]}(\tau')$ for all maximal $\tau'\sqsubset \tau$. In particular $\zeta_{E[s+1]} < \zeta_{E[s]}$.

  Now the stages $s_i$ where new elements are enumerated into $E$ gives a decreasing sequence of ordinals $\zeta_{E[s_i]}<\omega^b$. Since $\omega^b<\omega^\omega$ and $\omega^\omega$ is well-founded by assumption, there can be only finitely many stages where new elements are enumerated and thus $E$ is finite.
\end{proof}

Note that Theorem~\ref{thm:m} can be relativizable with set parameters. In the second-order setting with the recursive comprehension, we can replace primitive recursive sequences / $\Sigma_{1}$-definable infinite sets / functions defined by quantifier-free or $\Sigma_{1}$-formulas by sets. Thus, we have the following.
\begin{theorem}\label{thm:m-sec}
  Over $\mathsf{RCA}_{0}$ the following are equivalent:
  \begin{enumerate}[label=(\roman*)]
  \item\label{enum:m1-sec} $P\Sigma^0_1$: $P\phi$ for any $\Sigma^{0}_{1}$-formulas ($\Sigma_{1}$-formulas with set parametes),
  \item\label{enum:m2-sec} \lp{BME_*}: $\Forall{E}(E$ is a monotone enumeration bounded by $b\IMPL E$ is finite$)$,
  \item\label{enum:m3-sec} \lp{PH^*(2)}: $\Forall{X}\Forall{z}\left((\Forall{k}\Exists{n>k} n\in X) \IMPL \Forall{x}\Forall{q\ge 1} \Exists{y} \Big(\big([x,y]\cap X\big) \underset{*}{\rightarrow} (q)^2_z\Big)\right)$,
  \item\label{enum:m4-sec} \lp{A^*}: $\Forall{f}($the Ackermann-P\'{e}ter function relative to $f$ is total$)$,
  \item\label{enum:m5-sec} \lp{WF(\omega^\omega)}: $\neg\Exists{f}(f$ is an infinite descending sequence of ordinals $\alpha_{i}$ starting from $\omega^{\omega})$.
  \end{enumerate}
\end{theorem}

\section{Full \lp{BME}}

Chong, Slaman, Yang actually used certain iterations of the principle \bme in \cite{CSY14} called \lp{BME_\mathnormal{k}} and $\lp{BME}:=\bigcup_k \lp{BME_\mathnormal{k}}$ for the union of all these. 
In these principles,  bounded monotone enumerations will be enumerated relative to a real (in a continuous way). We will write $E(\sigma)$, with $\sigma\in \Nat^{<\Nat}$, for such an enumeration and understand that the stage $s$ will be implicitly given by $s=\lvert \sigma \rvert$. Further, we will compute a bounded tree in a similar fashion, i.e., by a function $V(\tau)$ where $\tau\in \Nat^{<\Nat}$.
Here, we again consider functions $E$ and $V$ defined by $\Sigma_1$-formulas to work within $I\Sigma_{1}$, but one can easily lift-up the following discussion into the second-order setting as same as Theorem~\ref{thm:m-sec}.
\begin{definition}\mbox{}
  \begin{enumerate}
  \item
    Let $E(\sigma)$ be a monotone enumeration as above.
    For a tree enumerated by $V$ a $\sigma\in V$ is called \emph{$E$-expansionary}
    if in $E(\sigma)$ a new element is enumerated a stage $\lvert \sigma \rvert$.
  \item
    A level $\ell$ in a tree $V$ is $E$-expansionary if there is an $n$ such that $\ell$ is minimal with
    for all $\sigma\in V$ with $\lvert\sigma\rvert = \ell$ and 
    there are at least $n$ $E$-expansionary initial segments of $\sigma$.
  \item A \emph{$k$-iterated monotone enumeration} is a sequence $(V_i,E_i)_{1\le i \le k}$ such that
    \begin{enumerate}
    \item each $V_i$ is a relativized recursively bounded tree as above,
    \item each $E_i$ is a relativized monotone enumeration procedure as above,
    \item for each $1\le j <k$, if $\sigma\in V_j$ is $E_j$-expansionary,
      then for each new element $\tau$ enumerated in $E_j(\sigma)$, $V_{j+1}(\tau)$ is a proper $E_{j+1}$-expansionary extension
      of $V_{j+1}(\tau_0)$, where $\tau_0$ is the longest initial segment of $\tau$ that had been enumerated into $E_{j}(\sigma)$ before.
    \end{enumerate}
  \item A \emph{$k$-path} for a $k$-iterated monotone enumeration (as above) is a sequence $(\sigma_i,\tau_i)_{1\le i \le k}$
    such that $\sigma_1\in V_1$, $\tau_1$ is a maximal sequence in $E_1(\sigma_1)$, and for each $1<j\le k$
    we have that $\sigma_j$ is a maximal sequence in $V_j(\tau_{j-1})$ and $\tau_j$ is a maximal sequence in $E_j(\sigma_j)$.
  \item A $k$-iterated monotone enumeration is $b$-bounded if $E_k(\sigma)$ is $b$-bounded for each $\sigma$.
  \item \lp{BME_\mathnormal{k}} is the statement that each bounded $k$-iterated monotone enumeration procedure contains 
    only finitely many $E_1$-expansionary levels in $V_1$.
  \end{enumerate}
\end{definition}

Let $\omega_{0}^\delta:=\delta$ and $\omega_{k+1}^\delta := \omega^{\omega^\delta_{k}}$.
In particular $\omega^\omega_k = \underbrace{ \omega^{\omega^{\iddots^\omega}}}_{\mathclap{\text{$k+1$ many $\omega$}}}$.
We will show the following theorem. 

\begin{theorem}\label{thm:bme} For all $k$
  \[
  I\Sigma_1 \vdash \lp{BME_\mathnormal{k}} \IFF \lp{WF(\omega^\omega_{\mathnormal{k}})}.
  \]
\end{theorem}
\begin{corollary}
  $I\Sigma_1 \vdash \Forall{k} \lp{BME_\mathnormal{k}} \IFF \lp{WF(\epsilon_0)}$.
\end{corollary}

The proof of \prettyref{thm:bme} proceeds by exhibiting a one-to-one correspondence between $k$-iterated monotone enumerations and ordinals $<\omega^\omega_k$.

For the backward direction of the proof we will consider bounded monotone enumerations of $\Nat$ together with a special termination symbol $\bot$.
This will not cause any problems since $\Nat \cup \{\bot\}$ can of course be code into $\Nat$. 
We will extend the assignment of ordinals to bounded monotone enumerations as in \eqref{eq:zeta} to include a case for $\bot$.
\[
\zeta_E(\tau) := 
\begin{cases}
  0 & \text{if $\tau({\lvert \tau \rvert -1}) = \bot$,} \\
  0 & \text{if ${\lvert \tau \rvert}_E = b$,} \\
  \omega^{b-{\lvert \tau \rvert}_E} & \text{if $\tau$ is a leaf in $E$ and ${\lvert \tau \rvert}_E<b$,} \\
  \sum_{\tau'\sqsupset_E \tau} \zeta_E(\tau') & \text{if $\tau$ is not a leaf.}
\end{cases}
\]
  
Now let a $k$-iterated monotone enumeration $(V_i,E_i)_{1\le i \le k}$ be given. 
Further assume that $\langle\sigma_1,\tau_1,\dots,\sigma_k,\tau_k\rangle$ is a $k$-path in $(V_i,E_i)_{1 \le i \le k}$. We assign the following ordinals.
\begin{align*}
  \zeta_{\langle \sigma_1,\tau_1,\dots,\sigma_k \rangle}(\tau) &:= \zeta_{E_k(\sigma_k)}(\tau), \\
  \zeta_{\langle \sigma_1,\tau_1,\dots,\sigma_j,\tau_j \rangle}\phantom{(\tau)} &:= 
  \max_{\substack{\sigma\in V_{j+1}(\tau_j)\\ \lvert \sigma \rvert = \ell}} \zeta_{\langle \sigma_1,\tau_1,\dots,\tau_j, \sigma\rangle}(\langle\rangle) ,
  \shortintertext{where $\ell$ is the maximal $E_{j+1}$-expansionary level in $V_{j+1}(\tau_j)$,}
  \zeta_{\langle \sigma_1,\tau_1\dots,\sigma_j\rangle}(\tau) &:=
  \begin{cases}
    0 & 
    \begin{varwidth}{5cm}
      if $\tau$ is a leaf in $E_j(\sigma_j)$\\ \hspace*{2em} and $\tau({\lvert \tau \rvert-1}) = \bot$,
    \end{varwidth} \\
    \omega^{\zeta_{\langle \sigma_1,\tau_1,\dots,\sigma_j,\tau\rangle}} &
    \begin{varwidth}{5cm}
      if $\tau$ is a leaf in $E_j(\sigma_j)$\\ \hspace*{2em} and $\tau({\lvert \tau \rvert-1}) \neq \bot$,
    \end{varwidth} \\
    \sum_{\tau'\sqsupset_{E_j(\sigma_j)} \tau} \zeta_{\langle \sigma_1,\tau_1,\dots,\sigma_j\rangle}(\tau') & \text{if $\tau$ is not a leaf in $E_j(\sigma_j)$.}
  \end{cases}
  \intertext{To the full $k$-iterated monotone enumeration we assign the following ordinal.}
\zeta_{(V_i,E_i)_{1\le i \le k}} \phantom{(\tau)}&:= \zeta_{\langle\rangle}.
\end{align*}
Note that $\zeta_{(V_i,E_i)_{1\le i \le k}} \le \omega^b_k < \omega^\omega_k$.

\begin{lemma}\label{lem:bmefor}
  Let $(V_i,E_i)_{1\le i \le k}$ be a $k$-iterated monotone enumeration and a tree $V'_1$ be given, 
  such that $V'_1$ properly extends $V_1$.
  If $V'_1$ contains strictly more $E_1$\nobreakdash-\hspace{0pt}expansionary levels thant $V_1$, then
  \[
  \zeta_{(V_i,E_i)_{1\le i \le k}}>\zeta_{{(V'_i,E_i)}_{1\le i \le k}}
  ,\]
  where for $i\ge 2$ we set $V'_i:=V_i$.
\end{lemma}
\begin{proof}
  We prove my induction that
  \begin{enumerate}[label=(\alph*)]
  \item\label{enum:l:1}
    $\zeta_{\langle \sigma_1,\tau_1,\dots,\sigma_j,\tau_j \rangle} > \zeta_{\langle \sigma_1,\tau_1,\dots,\sigma_j,\tau'_j \rangle}$,
    if $\tau'_j\sqsupseteq \tau_j$ enumerates a new $E_{j+1}$-\hspace{0pt}expansionary level in $V_{j+1}$,
  \item\label{enum:l:2}
    $\zeta_{\langle \sigma_1,\tau_1\dots,\sigma_j\rangle} > \zeta_{\langle \sigma_1,\tau_1\dots,\sigma'_j\rangle}$, if $\sigma'_j\sqsupseteq \sigma_j$ enumerates a new element into $E_{j}$.
  \end{enumerate}  
  This directly implies then the lemma.

  To prove the induction we start with \ref{enum:l:2} for $j=k$. This case follows as in \prettyref{pro:bme}.\\
  For \ref{enum:l:1} and $j$ we assume that \ref{enum:l:2} already holds for $j$. By the induction hypothesis each of the terms in the maximum in the definition $\zeta_{\langle \sigma_1,\tau_1,\dots,\sigma_j,\tau_j \rangle}$ decreases. Therefore,  $\zeta_{\langle \sigma_1,\tau_1,\dots,\sigma_j,\tau'_j \rangle} < \zeta_{\langle \sigma_1,\tau_1,\dots,\sigma_j,\tau_j \rangle}$. \\
  For \ref{enum:l:2} and $j<k$ we assume that \ref{enum:l:1} already holds for $j+1$. This case follows by a similar proof as in \prettyref{pro:bme} together with the induction hypothesis.
\end{proof}

For the backward direction we will only consider simplified iterated monotone enumerations where the trees $V_k(\tau)$ are trivial, i.e., they contain only branches of the form $\langle 0,\dots, 0, 1\rangle$, where the length codes $\tau$. Thus, we can omit the $V_i$ and assume that $E_{j+1}$ is of the form $E_{j+1}(\tau_j)$ with $\tau_j\in E_j$. 
With this the bound on the $E_1$-expansionary levels in $V_1$ then becomes a bound cardinality of $E_1$.

Further we make the assumption that each tree contains $\langle \bot \rangle$ and that $E_j(\langle \bot \rangle) = \{\bot\}$.
For ease of notation we will omit the $V_j$.

\begin{lemma}\label{lem:bmeback1}
  For any $\alpha <\omega_{k+1}^{b}$, one can effectively find an $k+1$-iterated bounded enumeration $\langle E_{1},\dots, E_{k+1} \rangle$
  where $E_1$ is bounded by $b$ and such that $\zeta_{\langle E_{1},\dots, E_{k+1} \rangle}=\alpha$.
\end{lemma}
\begin{proof}
  We will prove this lemma by induction on $k$.

  For the case $k=0$ and $\alpha=0$, set $E_{1}:=\{\langle \bot \rangle\}$.
  For $k=0$ and $\alpha >0$, write $\alpha=\sum_{1\le j\le l} \omega^{e_{j}}$ such that $b>e_{1}\ge e_{2}\ge\dots \ge e_l$.
  In this case one easily checks that, the enumeration the constant sequences $\langle j, \dots, j \rangle$ of length $b-e_j$ in $b-e_j$ steps for $j\in [1;l]$, i.e.,
  \begin{align*}
    E_{1}&:=\{\langle j \rangle^{\ast m} \mid 1\le j\le l \AND m \le b - e_j \},
    \shortintertext{such that}
    {\lvert \tau \rvert}_{E_1} &= {\lvert \tau \rvert} \quad\text{for any $\tau\in E_1$}
  \end{align*}
  is the desired tree. (We write $\langle j \rangle^{\ast m}$ for the $m$-fold repetition of $j$.)

  For the case $k>0$ and $\alpha=0$, we again set $E_{1}:=\{\langle \bot \rangle\}$, and $E_{i}(\langle \bot \rangle)=\{\langle \bot \rangle\}$ for any $j\in [1;k+1]$.
  If $\alpha>0$, write $\alpha=\sum_{1\le j\le l} \omega^{\alpha_{j}}$ such that $\omega_{k}^{b}>\alpha_{0}\ge \alpha_{1}\ge\dots\ge \alpha_l$.
  By induction hypothesis, one can find effectively $k$-iterated bounded enumerations $(E^{j}_i)_{1\le i \le k}$ such that $\zeta_{(E^{j}_i)_{1\le i \le k}}=\alpha_{j}$.

  Let  
  \begin{align*}
    E_{1} &:=\{\langle j \rangle\mid 1\le j\le l\}, \\
    E_{i+1}(\langle j \rangle \ast \tau) &:= \langle j \rangle \ast E^{j}_{i}(\tau) = \{ \langle j \rangle \ast \sigma \mid \sigma \in E^{j}_{i}(\tau) \},
  \end{align*}
  for $i\in [1;k]$. We can easily check that $\zeta_{(E_i)_{1 \le i \le l}}= \alpha$.
\end{proof}

We say that a bounded enumeration $E$ is \emph{separating} if
\[
E(\tau_1) \cap E(\tau_2) = E(\tau) \quad\text{where $\tau$ longest common initial substring of $\tau_1,\tau_2$}.
\]
In other words, $E$ is separating if different paths enumerate separate sets of strings, or each $\sigma$ is enumerated at most once into $E$.
We say that a $k$-iterated bounded enumeration $(E_i)_{1\le i \le k}$ is separating if each $E_i$ is separating.

We can make any enumeration separating by just coding into each string where it has been enumerated without changing the ordinal.

\begin{lemma}\label{lem:bmeback2}
  For any separating $k+1$-iterated bounded enumeration $(E_{i})_{1\le i \le k+1}$ bounded by $b+1$ with $\zeta_{(E_{i})_{1\le i \le k+1}}=:\alpha<\omega_{k+1}^{b}$ and for any $\beta<\alpha$, one can effectively find a separating proper monotone extension 
  $(E'_{i})_{1\le i \le k+1}$ also bounded by $b$, such that $\zeta_{(E'_{i})_{1\le i \le k+1}}\ge \beta$. 
  
  Proper extension means hear that only leafs of $E_i$ are extended in $E_i'$ and $E_1\subsetneq E_1'$.
\end{lemma}
\begin{proof}
  We will prove this lemma by induction on $k$.

  For the case $k=0$, write $\alpha=\sum_{1\le i\le l} \omega^{e_{j}}$ and $\beta=\sum_{1\le j\le l'} \omega^{f_{j}}$  such that $b>e_{0}\ge e_{1}\ge\dots \ge e_{l}$ and $b>f_{0}\ge\dots\ge f_{l'}$.\\
  If $l'<l$ and $e_{j}=f_{j}$ for all $j\le l'$, find a leaf $\tau\in E_{1}$ such that ${\lvert \tau \rvert}_{E_1}=b-e_{l'+1}$, and put $E_{1}'=E_{1}\cup\{\tau \ast \langle \bot \rangle\}$.\\
  Otherwise, there exists $j^{*}<l, l'$ such that $e_{j^{*}}>f_{j^{*}}$.
  Find a leaf $\tau\in E_{1}$ such that ${\lvert\tau\rvert}_{E_1}=b-e_{j^{*}}$.
  Let $E_{1}':=E_{1}\cup\{\tau \ast \langle j \rangle^{\ast m}\mid j^{*}\le j\le l' \AND m \le {e_{j^{*}}-f_{j*}}\}$ where $\langle j \rangle^{\ast m}$ is enumerated step by step, i.e., ${\lvert \sigma \ast \langle j \rangle^{\ast ({e_{j^{*}}-f_{j*}})}\rvert}_{E_1} = b-f_{j^{*}}$.

  For the case $k>0$,  write $\alpha=\sum_{1\le j\le l} \omega^{\alpha_{j}}$ and $\beta=\sum_{1\le j\le l'} \omega^{\beta_{j}}$  such that $\omega_{k}^{b}>\alpha_{1}\ge \dots \ge \alpha_l$ and $\omega_{k}^{b}>\beta_{1}\ge\dots \ge \beta_{l'}$.
  If $l'<l$ and $\alpha_{j}=\beta_{j}$ for all $j\le l'$.
  Find a leaf $\tau_1\in E_{1}$ such that $\zeta_{\langle  \rangle}(\tau_1)=\omega^{\alpha_{l'+1}}$.
  Set $E_{1}':=E_{1}\cup\{\tau \ast \langle \bot \rangle\}$, and set
  \begin{align*}
    \tau_{i+1} &:= \min\{\tau \mid \text{$\tau$ is leaf in $E_{i+1}(\tau_i)$ and $\zeta_{\langle \tau_1,\dots,\tau_i\rangle}(\tau)>0$}\}, \\
    E'_{i+1}(\tau) 
    &:= 
      \begin{cases}
        E_{i+1}(\tau_i) \cup \{\tau_{i+1} \ast \langle\bot\rangle\} & \text{if }\tau = \tau_i \ast \langle \bot \rangle ,   \\
        E_{i+1}(\tau) & \text{otherwise.}
      \end{cases}
  \end{align*}
  Otherwise, there exists $j^{*}<l, l'$ such that $\alpha_{j^{*}}>\beta_{j^{*}}$.
  Find a leaf $\tau_1\in E_{1}$ such that $\zeta_{\langle  \rangle}(\tau_1)=\omega^{\alpha_{j^{*}}}$.
  By induction hypothesis, there exist proper a extensions $(E'_{i})_{1\le i \le k}$ of $(E_{2}(\tau_1), E_{3},\dots, E_{k})$ such that $\zeta_{(E^*_{i})_{1\le i \le k}} \ge \beta_{j}$ for $j\in [j^*;l']$.
  (One can effectively find these extensions.) We may further assume that the new elements enumerated into $E'^{j}_i$ for different $j$ are different.
  
  Set 
  \begin{align*}
    E_{1}'& :=E_{1}\cup\{\tau_1 \ast \langle j \rangle^{\ast m}\mid j^{*}\le j\le l'\}, 
            \shortintertext{$m$ is minimal with $\zeta_{(E'^{j}_i[\lvert \tau_1\rvert + m])_{1\le i \le k}} < \alpha_j$,} 
    E_{2}'(\tau) &:=
                   \begin{cases}
                     E'^{j}_{i}[\lvert\tau \rvert] & \text{if $\tau = \tau_1 \ast \langle j \rangle$,} \\
                     E_2(\tau) & \text{otherwise,}
                   \end{cases} \\
    E'_{i+2}(\tau)&:=
    \begin{cases}
      E'^j_{i+1}(\tau) &\text{for $\tau$ being enumerated below a $\tau_1 \ast \langle j\rangle$.} \\
      E_j(\tau) & \text{otherwise.}
    \end{cases}
  \end{align*}
  The last case distinction is possible by separability. We can easily check that $(E_{i}')_{1\le i \le k+1}$ is again separable and $\alpha > \zeta_{(E_{i}')_{1\le i \le k+1}}\ge \beta$.
\end{proof}

\begin{proof}[Proof of \prettyref{thm:bme}]
  The forward direction follows directly from \prettyref{lem:bmefor} and the fact that $\zeta_{(V_i,E_i)_{1\le i\le k}} \le \omega^\omega_k$ for any $k$-iterated monotone enumeration.
  For the backward direction assume that there exists an infinite descending sequence of ordinals $(\alpha_n)_n$ with $\alpha_0=\omega^\omega_k$. Take $b$ large enough that $\alpha_1 \le \omega^b_k$.
  By \prettyref{lem:bmeback1} and the comments below it, there exists a separating $k$-iterated $b+1$-bounded monotone enumeration $(E_i^1)_{1 \le i \le k}$ with 
  $\zeta_{(E_i^1)_{1 \le i \le k}} = \alpha_1$. \prettyref{lem:bmeback2} gives a sequence $\big((E_i^n)_{1 \le i \le k}\big)_n$ of separating $k$-iterated $b$-bounded monotone enumerations with $\zeta_{(E_i^n)_{1 \le i \le k}} \ge \alpha_n$. Now set $E'_i := \bigcup_n E_i^{n}$, then $(E_i')_{1\le i \le k}$ is again $k$-iterated $b+1$-bounded monotone enumeration. However by construction $E_0$ is infinite and thus we get $\NOT\lp{BME_{\mathnormal{k}}}$.
\end{proof}

We close this section with showing that weak K\"onig's lemma, a formulation of the Baire Category theorem, and the cohesive principle are $\Pi^1_1$-conservative over $\lp{RCA_0}+ \lp{WF(\mathcal{O})}$ for each primitive recursive linear order $\mathcal{O}$. Here $\lp{WF(\mathcal{O})}$ stands for the statement that $\mathcal{O}$ is well-founded. (In particular one can take for $\mathcal{O}$ any ordinal $\alpha<\epsilon_0$.)
This shows that \lp{BME} is stable with those axioms.

\begin{theorem}[Folklore]\label{thm:wklcons}
  For each primitive recursive linear order $\mathcal{O}$, the system $\lp{WKL_0}+ \lp{WF(\mathcal{O})}$ is $\Pi^1_1$-conservative over $\lp{RCA_0} + \lp{WF(\mathcal{O})}$.
\end{theorem}
\begin{proof}
  The proof proceeds as the classical proof of the $\Pi^1_1$-conservativity of \lp{WKL_0} over \lp{RCA_0}, see \cite[IX.2]{sS09}. 
  By a standard argument it is sufficient to show that each countable model of $\lp{RCA_0} + \lp{WF(\mathcal{O})}$ can be extended to an $\omega$-submodel of $\lp{WKL_0}+\lp{WF(\mathcal{O})}$.
  This follows, again by a standard argument, from the fact that for each model $M=(\lvert M \rvert, \mathcal{S}_M)$ of $\lp{RCA_0} + \lp{WF(\mathcal{O})}$ and each tree infinite tree $T\in \mathcal{S}_M$ one can find an $\omega$-submodel $M'\models  \lp{RCA_0} + \lp{WF(\mathcal{O})}$ containing an infinite branch of $T$.
    To establish this, let $M=(\lvert M \rvert, \mathcal{S}_M)$ be a model of $\lp{RCA_0} + \lp{WF(\mathcal{O})}$.
    The model will be extended by forcing along the set $\mathcal{T}_M$ of infinite 0/1-trees in $M$ ordered by inclusion, i.e.,
    \[
    \mathcal{T}_M := \left\{\, T\in \mathcal{S}_M \sizeMid M \models \text{$T$ is an infinite subtree of $2^\Nat$} \,\right\}
    .\]
    For $T_1,T_2\in \mathcal{T}_M$ we set $T_1\ge T_2$ if{f} $T_1\supseteq T_2$. A set $\mathcal{D}\subseteq \mathcal{S}_M$ is called dense if for every $T\in \mathcal{T}_M$ there is an $T'\in \mathcal{D}$ with  $T\ge T'$. 
    A set $G$ is called $\mathcal{T}_M$-generic if{f} it meets every definable, dense subset of $\mathcal{T}_M$.

    One can show that any infinite tree in $M$ has a $\mathcal{T}_M$-generic path and that for each $\mathcal{T}_M$-generic $G$ we have that $M[G]\models I\Sigma^0_1$,
    where $M[G]:=\big(\vert M \rvert, \{X \subseteq \lvert M \rvert \mid \text{$X$ is recursive in $G$ and sets from $\mathcal{S}_M$}\}\big)$.
    See Lemmas~X.2.3--5 of \cite{sS09}.

    To prove this theorem it is thus sufficient to show the following lemma.
    \begin{lemma}\label{lem:wklcons}
      For each $M\models \lp{RCA_0} + \lp{WF(\mathcal{O})}$ and each $\mathcal{T}_M$-generic $G$, we have that $M[G]\models \lp{WF(\mathcal{O})}$.
    \end{lemma}
    \begin{proof}[Proof of \prettyref{lem:wklcons}]
      To show this lemma it is sufficient to show that the $e$-th Turing functional $\Phi^G_e$ relative to $G$ for any ($e\in \lvert M \rvert$) does not give an infinite descending chain in $\mathcal{O}$.
      
      For a $\sigma\in \lvert M \rvert$ viewed as a finite binary sequence in $M$, and $T\in \mathcal{T}_M$ we will write $\sigma\prec T$ if{f} $M\models \text{``any $\tau\in T$ is compatible with $\sigma$''}$.
      For $e,m\in\lvert M \rvert$, put
      \begin{align*}
        \mathcal{D}^1_e &:= \left\{\, T \in \mathcal{T}_M \sizeMid 
                          \Exists{n}\Exists{\sigma} \left(
                          \begin{multlined}
                          \sigma \prec T \AND
                          \Forall{i\le n} (\Phi^\sigma_{e,\lvert \sigma \rvert}[i]{\downarrow}) \\[0.5ex] \AND 
                          \text{$\big(\Phi^\sigma_{e,\lvert \sigma \rvert}[i]\big)_{i=0}^n$ is not strictly decreasing in $\mathcal{O}$}
                        \end{multlined}\right) \,\right\},\\
        \mathcal{D}^2_{e,m} & := \left\{\, T \in \mathcal{T}_M \sizeMid \Forall{\tau\in T} \left(\Phi^\tau_{e,\lvert \tau \rvert}[m]{\uparrow}\right)\,\right\}, \\
        \mathcal{D}_e &:= \mathcal{D}^1_e \cup \  \bigcup_{\mathclap{{m\in\lvert M \rvert}}} \mathcal{D}^2_{e,m}.
      \end{align*}
      Clearly, if $T\in\mathcal{D}_e$ and $G\in [T]$, then, $\Phi^G_e$ is not an infinite descending sequence of $\mathcal{O}$.
      
      Now, we want to show that $\mathcal{D}_e$ is dense. 
      Assume not then there exists an infinite tree $T\in \mathcal{T}_M$ such that any infinite subtree is not in $\mathcal{D}_e$.
      Put $l_0:=0$ and $l_{m+1}:= \min \left\{ l > l_m \sizeMid \Forall{\tau\in T\cap 2^l} (\Phi^\tau_{e,\lvert \tau \rvert}[m+1]{\downarrow})\right\}$.
      Such an $l$ always exists since there are only finitely many $\tau\in T$ such that $\Phi^\tau_{e,\lvert \tau \rvert}[m]{\uparrow}$. Otherwise they would form an infinite subtree of $T$ belonging to $\mathcal{D}^2_{e,m}$.
      Note that the sequence $l_m$ is computable in $M$.

      For each $m\in \lvert M \rvert$ and each $\tau \in T\cap 2^{l_m}$ the finite sequence $\big(\Phi^\tau_{e,\lvert \tau \rvert}[i]\big)_{i=0}^m$ is strictly decreasing in $\mathcal{O}$,
        since otherwise the subtree below $\tau$ would lie in $\mathcal{D}^1_e$. 
        Therefore, the function $f(m) := \min_{\mathcal{O}}\left\{ \phi^\tau_{e,\lvert \tau \rvert} \sizeMid \tau \in T \cap 2^{l_m} \right\}$ is computable in $M$ and one easily checks that it gives an infinite strictly decreasing sequence in $\mathcal{O}$. This contradicts the fact $M\models \lp{WF(\mathcal{O})}$ and hence $\mathcal{D}_e$ must be dense.
   \end{proof}
 \renewcommand{\qed}{}\end{proof}

The Baire Category theorem for Cantor space can be formulated in the following way.
For a $\sigma\in 2^{<\Nat}$ and $X\in 2^\Nat$ we will write $\sigma \subseteq X$ if $X$ extends $\sigma$. A set $D$ is called dense if for each $\sigma\in 2^{<\Nat}$ there is a $\tau\in D$ with $\tau\supseteq \sigma$. 
We say that $X$ meets $D$ if $\Exists{\sigma\in D} \left(\sigma \subseteq D\right)$.
The Baire category theorem (\lp{BCT}) is then the statement that  every sequence of dense sets $D_i\subseteq 2^{<\Nat}$ there exists a set $G$ that meets every $D_i$.

\begin{theorem}
  For each primitive recursive linear order $\mathcal{O}$, the system $\lp{RCA_0}+ \lp{WF(\mathcal{O})} + \lp{BCT}$ is $\Pi^1_1$-conservative over $\lp{RCA_0} + \lp{WF(\mathcal{O})}$.  
\end{theorem}
\begin{proof}
  As in the proof of \prettyref{thm:wklcons} it is sufficient to show that each countable model $M=(\lvert M \rvert, \mathcal{S}_M)$ of $\lp{RCA_0} + \lp{WF(\mathcal{O})}$ can be extended to an $\omega$-submodel of \lp{BCT}.

  In \cite[Lemma~6.2]{BS93} it is shown that one can find a $G$ such that $M[G]\models \ls{RCA_0} + \lp{BCT}$ and $G$ intersects all dense $M$-definable sets. Such a set $G$ will be called $M$-generic. The theorem follows by showing the following lemma.

  \begin{lemma}\label{lem:bctcons}
    For each $M\models \ls{RCA_0} + \lp{WF(\mathcal{O})}$ and each $M$-generic $M[G]\models \lp{WF(\mathcal{O})}$.
  \end{lemma}
  \begin{proof}[Proof of \prettyref{lem:bctcons}]
    As in \prettyref{lem:wklcons} we construct for each Turing-functional $\Phi_e^X$ a dense set $D_e$. Hence put,
    \begin{align*}
      D^1_e 
      &:= \left\{\, \sigma\in 2^{<\lvert M \rvert} \sizeMid \Exists{n\in \lvert M\rvert} \left(
      \begin{multlined}
        \Forall{i\le n} (\Phi^\sigma_{e,\lvert \sigma \rvert}[i]{\downarrow}) \AND
        \big(\Phi^\sigma_{e,\lvert \sigma\rvert}[i]\big)^n_{i=0} \\[0.5ex]
        \text{is not strictly decreasing in $\mathcal{O}$}
      \end{multlined}
      \right) \,\right\}, \\
      D^2_{e,m} &:= \left\{\, \sigma\in 2^{<\lvert M\rvert} \sizeMid \Forall{\tau \supseteq \sigma}\left(\Phi^\tau_{e,\lvert \tau \rvert}[m]{\uparrow}\right) \, \right\}, \\
      D & := D^1_e \cup\ \bigcup_{\mathclap{m\in M}}D^2_{e,m}.
    \end{align*}
    Clearly, if a generic $G$ meets $D_e$ then $\Phi^G_e$ is not an infinite descending sequence of $\mathcal{O}$.
    Now, we want to show that $D_e$ is dense.
    Assume not, then there exists a $\sigma_0$ such that for any $\sigma \supseteq \sigma_0$ we have $\Forall{i \le n}\left(\Phi^\sigma_{e,\lvert \sigma \rvert}[i]{\downarrow}\right)$ implies that $\big(\Phi^\sigma_{e,\lvert \sigma\rvert}[i]\big)_{i=0}^n$ is strictly decreasing in $\mathcal{O}$, 
    and for any $m\in \langle M \rangle$ the set $\left\{\tau \sizeMid \Phi^\tau_{e,\lvert \tau \rvert}[m]{\downarrow} \right\}$ is dense below $\sigma_0$. By the latter condition one can easy construct a computable in $M$ set $X\supseteq \sigma_0$ such that $\Phi^X_e(m){\downarrow}$ for any $m\in \lvert M \rvert$. By the former $\Phi^X_e$ outputs a strictly decreasing sequence of $\mathcal{O}$ in $M$ which is a contradiction. 
  \end{proof}
  \renewcommand{\qed}{}
\end{proof}

A sentence of the form
\[
\Forall{X} \left(\phi(X) \IMPL \Exists{Y} \eta(X,Y) \right)
\]
where $\phi$ is arithmetical and $\eta\in \Sigma^0_3$, is called \emph{restricted $\Pi^1_2$-sentence} (r-$\Pi^1_2$). Hirschfeld and Shore showed that the cohesive principle (\lp{COH}) is r-$\Pi^1_2$-conservative over $\ls{RCA_0}$, see \cite[Theorem~7.18]{dH15} and \cite{HS07}.
Assume that $\ls{RCA_0} + \lp{WF(\mathcal{O})}$ does prove a r\nobreakdash-$\Pi^1_2$\nobreakdash-\hspace{0pt}sentence, i.e.,
\begin{align*}
  \ls{RCA_0} + \lp{COH}& + \lp{WF(\mathcal{O})} 
   \vdash \Forall{X} \left(\phi(X) \IMPL \Exists{Y} \eta(X,Y) \right) .
  \intertext{By the deduction theorem this is equivalent to}
  \ls{RCA_0} + \lp{COH} 
  & \vdash \Forall{Z}\lp{WF(\mathcal{O})}[Z] \IMPL \Forall{X} \left(\phi(X) \IMPL \Exists{Y} \eta(X,Y) \right),
    \intertext{where $\lp{WF(\mathcal{O})}[Z]$ denotes each $Z$-computable sequence is well-founded. Note that this can be written as a $\Sigma^0_2[Z]$-formula.
    By logical transformation this is equivalent to}
    \ls{RCA_0} + \lp{COH} &\vdash \Forall{X} \left(\phi(X) \IMPL \Exists{Y} \Exists{Z} \left(\eta(X,Y) \OR \NOT \lp{WF}[Z] \right)\right).
\end{align*}
Since this is again a r\nobreakdash-$\Pi^1_2$\nobreakdash-sentence we can apply the above-mentioned result.
This proves the following theorem.
\begin{theorem}
  For each primitive recursive linear order $\mathcal{O}$, the system $\lp{RCA_0}+ \lp{WF(\mathcal{O})} + \lp{COH}$ is r-$\Pi^1_2$-conservative over $\lp{RCA_0} + \lp{WF(\mathcal{O})}$.  
\end{theorem}

\section{Conclusion}

We have shown in \prettyref{thm:m} that \lp{WF(\omega^\omega)} has many equivalent formulations and occurs far more often than expected. It has been rediscovered in different contexts, see for instance \cite{HP86} and \cite{CSY14} as already mentioned above. This shows that there are only a few natural first-order principles between $I\Sigma_1$ and $I\Sigma_2$, and \lp{WF(\omega^\omega)} has to be considered one of them, besides induction and bounded collection principles.
For this reason we believe that the usually in reverse mathematics considered Kirby-Paris hierarchy as shown in \prettyref{fig:pariskirby} has to be extended to give a comprehensive picture. \prettyref{fig:exth} displays such an extension by \lp{WF(\omega^\omega)}.
\begin{figure}[tb]
  \centerline{
  \xymatrix@1{
    I\Sigma_1 \ar@{=>}[r] & B\Sigma_2 \ar@{=>}[r] & I\Sigma_2 \ar@{=>}[r] & B\Sigma_3 \ar@{=>}[r] & \cdots
  }
  }
  \caption{Pairs-Kirby hierarchy}\label{fig:pariskirby}
\end{figure}
\begin{figure}[tb]
  \centerline{
  \xymatrix@1{
    & B\Sigma_2 \ar@{=>}[dr] \ar|-*=0@{||}@{<->}[dd]\\
    I\Sigma_1  \ar@{=>}[dr] \ar@{=>}[ur] & & B\Sigma_2 + \lp{WF(\omega^\omega)} \ar@{=>}[r] & I\Sigma_2  \ar@{=>}[r] & B\Sigma_3 \ar@{=>}[r] & \cdots  \\
    & \lp{WF(\omega^\omega)} \ar@{=>}[ur]
  }}
  \caption{Extended Paris-Kirby hierarchy}\label{fig:exth}
\end{figure}
This hierarchy has been defined in \cite{HP98}. There the considered formulation of \lp{WF(\omega^\omega)} was $P\Sigma_1$, and more generally $P\Sigma_{n+1}$ for all $n$ was considered. As mentioned above $P\Sigma_{n+1}$ lies between $I\Sigma_{n+1}$ and $I\Sigma_{n+2}$. However, a similar equivalence as in \prettyref{thm:m} for $P\Sigma_{n+1}$ with $n>0$ cannot hold for quantifier reasons. In detail, $P\Sigma_2$ is $\Pi_4$ while $\lp{WF(\omega^\omega_2)}$ is still $\Pi_3$. Thus, it is unlikely to find similar extensions between $I\Sigma_{n+1}$ and $I\Sigma_{n+2}$ that are equally natural.

\smallskip
We furthermore characterize the principles \lp{BME} and \lp{BME_{\mathnormal{n}}} in terms of well-foundedness of ordinals.
This allows us to answer the question whether Ramsey's theorem for pairs and two colors (\lp{RT^2_2}) implies \lp{BME}, as ask by Chong, Slaman, and Yang in \cite[Question~5.2]{CSYtaInd}, negatively. This cannot be the case since it is known that \lp{RT^2_2} is $\Pi^1_1$\nobreakdash-conservative over $I\Sigma_2$, see \cite{CJS01}, where $\omega^\omega_3$ cannot be seen to be well-founded in $I\Sigma_2$. Thus $\lp{RCA_0}+\lp{RT^2_2} \nvdash \lp{BME_3}$.

Let $\lp{ME}$ be the \emph{monotone enumeration principle} which states that each unbounded monotone enumeration has an infinite branch. This principle is formalized in \ls{RCA_0}. For \lp{ME} we have the following well-known result.
\begin{theorem}[Folklore, \ls{RCA_0}] 
  \ls{ACA_0} and \lp{ME} are equivalent.
\end{theorem}
\lp{BME} can be seen as a miniaturization of \lp{ME} as  certain iterations of the Paris-Harrington principle are for Ramsey's theorem for pairs, see \cite{BW,kY13,kYta}, or has been done for $P\Sigma_1$ in \cite{HP86}. For the Paris-Harrington principle equivalences between these miniaturizations and the provably recursive functions (in some cases even provable $\Pi^0_3$ or $\Pi^0_4$ statements) of \lp{RT^2_2} over different systems (in detail \ls{WKL_0}, \ls{WKL_0^*}) have been established. For $P\Sigma_1$ this has not been done yet.
Our characterization in \prettyref{thm:m} together with Theorem~3 of \cite{HP86} shows that the miniaturization of \cite{HP86} is faithful, in the sense that they prove the same $\Pi^0_2$\nobreakdash-sentences.
Since \prettyref{thm:bme} shows that the $\Pi^0_3$\nobreakdash-sentences of \lp{BME} are exactly the same as of \lp{PA}, \lp{BME} is a faithful miniaturization of \lp{ME} in the same way.

\bibliographystyle{amsplain}
\bibliography{bib}

\end{document}